\newcommand{\D}{\mathcal D}
\newcommand{\C}{\mathcal C}
\newcommand{\Q}{\mathcal Q}
\newcommand{\R}{\mathcal R}
\newcommand{\sub}{\subseteq}
\newcommand{\comment}[1]{}
\newtheorem{theorem}{Theorem}
\newtheorem{lemma}[theorem]{Lemma}
\newtheorem{corollary}[theorem]{Corollary}
\newtheorem{proposition}[theorem]{Proposition}
\theoremstyle{definition}
\def\?#1{\vadjust{\vbox to 0pt{\vss\vskip-8pt\leftline{%
     \llap{\hbox{\vbox{\pretolerance=-1
     \doublehyphendemerits=0\finalhyphendemerits=0
     \hsize16truemm\tolerance=10000\small
     \lineskip=0pt\lineskiplimit=0pt
     \rightskip=0pt plus16truemm\baselineskip8pt\noindent
     \hskip0pt        
     #1\endgraf}\hskip7truemm}}}\vss}}}
     \title{Algebraically grid-like graphs have large tree-width}
     \author{Daniel Wei\ss{}auer}
     \date{Department of Mathematics\\ University of Hamburg}
\begin{document}

	\maketitle     
     
	\begin{abstract}
	
	By the Grid Minor Theorem of Robertson and Seymour, every graph of sufficiently large tree-width contains a large grid as a minor. Tree-width may therefore be regarded as a measure of 'grid-likeness' of a graph.
 	
 	The grid contains a long cycle on the perimeter, which is the $\mathbb{F}_2$-sum of the rectangles inside. Moreover, the grid distorts the metric of the cycle only by a factor of two. We prove that every graph that resembles the grid in this algebraic sense has large tree-width:
 	
 	Let $k, p$ be integers, $\gamma$ a real number and~$G$ a graph. Suppose that~$G$ contains a cycle of length at least~$2 \gamma p k$ which is the $\mathbb{F}_2$-sum of cycles of length at most~$p$ and whose metric is distorted by a factor of at most~$\gamma$. Then~$G$ has tree-width at least~$k$.
 	
	\end{abstract}	     
     
     \begin{section}{Introduction}


     For a positive integer~$n$, the $(n \times n)$-grid is the graph~$G_n$ whose vertices are all pairs $(i,j)$ with $1 \leq i, j  \leq n$, where two points are adjacent when they are at Euclidean distance~1. The cycle~$C_n$, which bounds the outer face in the natural drawing of~$G_n$ in the plane, has length $4(n-1)$ and is the $\mathbb{F}_2$-sum of the rectangles bounding the inner faces. This is by itself not a distinctive feature of graphs with large tree-width: The situation is similar for the $n$-wheel~$W_n$, the graph consisting of a cycle~$D_n$ of length~$n$ and a vertex $x \notin D_n$ which is adjacent to every vertex of~$D_n$. There, $D_n$ is the $\mathbb{F}_2$-sum of all triangles $xyz$ for $yz \in E(D_n)$. Still, $W_n$ only has tree-width~3.
     
     
     The key difference is the fact that in the wheel, the metric of the cycle is heavily distorted: any two vertices of~$D_n$ are at distance at most two within~$W_n$, even if they are far apart within~$D_n$. In the grid, however, the distance between two vertices of~$C_n$ within~$G_n$ is at least half of their distance within~$C_n$.
     
     In order to incorporate this factor of two and to allow for more flexibility, we equip the edges of our graphs with lengths. For a graph~$G$, a \emph{length-function on~$G$} is simply a map $\ell : E(G) \to \mathbb{R}_{>0}$. We then define the \emph{$\ell$-length} $\ell(H)$ of a subgraph $H \sub G$ as the sum of the lengths of all edges of~$H$. This naturally induces a notion of distance between two vertices of~$G$, where we define~$d_G^{\ell}$ as the minimum $\ell$-length of a path containing both. A subgraph $H \sub G$ is \emph{$\ell$-geodesic} if it contains a path of length $d_G^{\ell}(a,b)$ between any two vertices $a, b \in V(H)$. 
          
          When no length-function is specified, the notions of length, distance and geodecity are to be read with respect to $\ell \equiv 1$ constant.
          
          On the grid-graph~$G_n$, consider the length-function~$\ell$ which is equal to~1 on $E(C_n)$ and assumes the value~2 elsewhere. Then~$C_n$ is $\ell$-geodesic of length $\ell(C_n) = 4(n-1)$ and the sum of cycles of $\ell$-length at most~8. We show that any graph which shares this algebraic feature has large tree-width.
     
	     	\begin{theorem} \label{main theorem lengths}
	     		Let~$k$ be a positive integer and $r > 0$. Let~$G$ be a graph with rational-valued length-function~$\ell$. Suppose~$G$ contains an $\ell$-geodesic cycle~$C$ with $\ell(C) \geq 2 r k$, which is the $\mathbb{F}_2$-sum of cycles of $\ell$-length at most~$r$. Then the tree-width of~$G$ is at least~$k$.
	     	\end{theorem}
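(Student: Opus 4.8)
I would deduce the theorem from the purely combinatorial fact that $C$ cannot be separated by few vertices — a ``grid‑like connectivity'' property — and it is in proving that fact that the $\mathbb{F}_2$‑decomposition and the geodesic hypothesis are really used.

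\emph{Preliminary reductions.} We may assume $k\geq 2$: any graph with an edge has tree-width $\geq 1$, and $\ell(C)\geq 2rk$ forces $G$ to have an edge. Passing to the subgraph $\bigcup_i C_i$ (the union of the cycles occurring in the sum) only decreases tree-width, so assume $G=\bigcup_i C_i$; deleting from the sum any $C_i$ contained in a component of $\bigcup_i C_i$ disjoint from $C$, assume also that $G$ is connected. Now every edge $uv$ of $C$ lies in some $C_i$ with $\ell(C_i)\leq r$, so $\ell(uv)\leq r<\ell(C)/2$ (using $k\geq 2$); hence the edge $uv$ is the shorter of the two $C$-arcs between $u$ and $v$, and $\ell$-geodesicity gives $d_G^{\ell}(u,v)=\ell(uv)$. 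Since $C_i-uv$ is a $u$--$v$ path of length $\ell(C_i)-\ell(uv)$, we get $\ell(uv)\leq\ell(C_i)-\ell(uv)$, i.e. $\ell(uv)\leq r/2$. Consequently $|V(C)|=|E(C)|\geq 2\ell(C)/r\geq 4k$. (Scaling $\ell$ to an integer-valued function, possible since $\ell$ is rational, is harmless and sometimes convenient.)

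\emph{Reduction to a connectivity statement.} I claim it suffices to prove
\begin{itemize}
\item[$(\star)$] for every $Z\sub V(G)$ with $|Z|\leq k$, the graph $G-Z$ has a component containing all of $V(C)\setminus Z$.
\end{itemize}
Indeed, given $(\star)$, for each such $Z$ let $\beta(Z)$ be that component; it is nonempty since $|V(C)|\geq 4k>k\geq|Z|$, and it is the \emph{unique} component of $G-Z$ meeting $V(C)$. If $Z\sub Z'$ with $|Z'|\leq k$, then $\beta(Z')$ is a connected subgraph of $G-Z$ meeting $V(C)\setminus Z'\neq\emptyset$, so $V(\beta(Z'))\sub V(\beta(Z))$. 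Thus $\beta$ is a haven of order $k+1$ in $G$, and by the haven/tree-width duality of Robertson and Seymour this yields $\tw(G)\geq k$.

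\emph{Proving $(\star)$ — the main obstacle.} Suppose some $Z$ with $|Z|\leq k$ splits $V(C)\setminus Z$ into at least two components of $G-Z$. Traversing $C$ once decomposes it into the arcs $A_1,\dots,A_q$ of $C-(Z\cap V(C))$, with $q\leq|Z\cap V(C)|\leq k$, joined by connectors passing through $Z\cap V(C)$; each $A_j$ lies in one component $K_j$ of $G-Z$, and not all the $K_j$ coincide. Pick $a\in A_j$, $b\in A_{j'}$ with $K_j\neq K_{j'}$: then $Z$ separates $a$ from $b$, while $\ell$-geodesicity gives $d_G^{\ell}(a,b)=\min(\ell(\Gamma_1),\ell(\Gamma_2))\leq\ell(C)/2$ for the two $C$-arcs $\Gamma_1,\Gamma_2$ between $a$ and $b$. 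The task is to turn this into a contradiction, and this is where $C=\sum_i C_i$ must enter: each $C_i$ is a cycle of length $\leq r$, so it either lies inside a single component of $G-Z$ or meets $Z$, whence the $C_i$ cannot reconnect the two sides of $Z$ ``for free'' — yet their $\mathbb{F}_2$-sum $C$ does cross from $K_j$ to $K_{j'}$. I expect the argument to show that each of the $\leq k$ vertices of $Z$ can absorb at most $\approx r$ of $\ell(C)$ (via the short detours $C_i-e$ around the $C$-edges $e$ incident to $Z$), so that the hypothesis $\ell(C)\geq 2rk$ leaves no room for $Z$ to genuinely disconnect the arcs without either merging some $K_j$'s or producing an $a$--$b$ path avoiding $Z$ shorter than both $\ell(\Gamma_1)$ and $\ell(\Gamma_2)$, contradicting that $C$ is $\ell$-geodesic. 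Making this last step precise — extracting from the algebraic relation $C=\sum_i C_i$ together with the smallness of the $C_i$ an actual short rerouting across $Z$ — is the heart of the proof and the step I expect to require the most care, particularly in the extremal case $\ell(C)=2rk$.
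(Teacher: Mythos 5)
Your proposal has two genuine problems. First, the statement $(\star)$ you reduce to is false. Take $G$ to be the $n\times n$ grid with $\ell\equiv 1$, $C$ its perimeter cycle (the $\mathbb{F}_2$-sum of the unit squares, so $r=4$), and $Z=\{(1,2),(2,1)\}$: the corner $(1,1)$ lies on $C$ but forms a component of $G-Z$ by itself, so no component of $G-Z$ contains all of $V(C)\setminus Z$ even though $|Z|=2\leq k$. (Your preliminary reductions do not exclude this example, since the grid is already the union of its squares.) The hypotheses only support a weaker conclusion, namely that for every $Z$ with $|Z|\leq k$ some component of $G-Z$ contains all but roughly $2\lfloor p/2\rfloor k$ vertices of $C$ (after scaling to $\ell\equiv 1$), i.e.\ at least half of $V(C)$; removed vertices of $C$ near $Z$ may well end up in small components. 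This weaker statement cannot feed your haven construction directly (a component with exactly half the vertices of $C$ need not be unique), and the paper instead plugs it into the Robertson--Seymour separation property of graphs of small tree-width (every component of $G-X$ contains at most $|A\setminus X|/2$ vertices of $A$ for some $X$ with $|X|\leq k$), which is exactly calibrated to this bound.

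Second, the heart of the argument is missing, as you yourself acknowledge: the passage from ``each short cycle $C_i$ either avoids $Z$ or cannot bridge two far-apart parts of $Z$'' to an actual contradiction is not carried out, and this is precisely where the real work lies. The paper needs two separate ideas here. One is a parity lemma: if $\R$ is a family of disjoint vertex sets, each meeting $C$ in a (possibly empty) arc, and no generating cycle meets two members of $\R$, then counting edges of $C=\bigoplus\D$ between each $R\in\R$ and the components of $G-\bigcup\R$ modulo $2$ forces all of $C-\bigcup\R$ into a single component. The other is a clustering lemma exploiting geodesicity: starting from $Z$, one greedily grows balls (a range argument along $C$) to produce clusters $R\in\R$ with $Z\sub\bigcup\R\sub Z\cup V(C)$, pairwise at distance greater than $\lfloor p/2\rfloor$, each meeting $C$ in an arc, and with $|\bigcup\R\cap V(C)|\leq 2\lfloor p/2\rfloor|Z|$ --- this is what makes precise your heuristic that each vertex of $Z$ ``absorbs'' only about $r$ of $\ell(C)$, and it is needed because vertices of $Z$ may be close together or far from $C$, so one cannot argue vertex by vertex. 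Your sketch addresses neither the parity step nor this clustering step, so the proposal is not a proof, and its intended strengthening $(\star)$ is unprovable as stated.
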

	     
     	     	 	The starting point of Theorem~\ref{main theorem lengths} was a similar result of Matthias Hamann and the author~\cite{HW16sidma}. There, it is assumed that not only the fixed cycle~$C$, but the whole cycle space of~$G$ is generated by short cycles. 
	
	\begin{theorem}[{{\cite[Corollary~3]{HW16sidma}}}] \label{bounding ctw}
	Let~$k, p$ be positive integers. Let~$G$ be a graph whose cycle space is generated by cycles of length at most~$p$. If~$G$ contains a geodesic cycle of length at least~$kp$, then the tree-width of~$G$ is at least~$k$.
\end{theorem}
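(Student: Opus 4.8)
As a warm-up, Theorem~\ref{main theorem lengths} applied with the constant length-function and $r=p$ already yields $\tw(G)\ge\lfloor k/2\rfloor$; the point of the present statement is to recover the missing factor of two, and for this one must use the hypothesis that the \emph{whole} cycle space of $G$ --- not merely the single cycle $C$ --- is generated by cycles of length at most $p$.

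The idea I would follow is to read the relation $C=\sum_iZ_i$ (each $Z_i$ a cycle with $|E(Z_i)|\le p$) as a \emph{filling} of $C$ by cells of size at most $p$, and to use geodecity of $C$ to force this filling to be ``fat''. A first, purely local consequence of geodecity: if a cycle $Z$ with $|E(Z)|\le p$ meets $C$ at all, then $V(Z)\cap V(C)$ is confined to a single arc of $C$ of length at most $p$. Indeed, traversing $Z$, its maximal subpaths along $C$ are joined by detours off $C$, and geodecity forces each detour between $x,y\in V(C)$ to have length at least $d_C(x,y)$; as these detour lengths sum to at most $p$ around $Z$, the triangle inequality in the metric of $C$ confines $V(Z)\cap V(C)$ as claimed. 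In particular the support arcs of the $Z_i$ cover $E(C)$, so at least $k$ distinct short cycles meet $C$ --- and organising a minimal such family around $C$ should give much more.

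Concretely, the plan is: (i) choose an $\mathbb{F}_2$-chain $D=\sum_{i\in I}Z_i$ with $\partial D=C$ of minimal size and argue that $D$ together with $C$ carries essentially the structure of a planar near-triangulation $\Delta$ with outer cycle $C$, all bounded faces of size at most $p$, and $1$-skeleton a subgraph of $G$; (ii) observe that since $C$ is geodesic in $G$ and the $1$-skeleton of $\Delta$ embeds in $G$, the cycle $C$ is also geodesic in $\Delta$, which prevents $\Delta$ from degenerating into a long thin strip of cells (whose boundary would be far from geodesic) and forces it to spread out; (iii) prove the planar lemma that a disk $\Delta$ with geodesic outer cycle of length at least $kp$ and all bounded faces of size at most $p$ has $\tw(\Delta)\ge k$, by exhibiting a bramble of order $k+1$: cut $C$ into $k$ arcs of length at least $p$, and use the bounded face-size together with geodecity to sweep $\Delta$ by two transverse families of pairwise disjoint geodesic chords --- analogues of the rows and columns of a grid --- whose crossing unions are pairwise-touching connected sets admitting no hitting set of size at most $k$; (iv) conclude $\tw(G)\ge\tw(\Delta)\ge k$, the $1$-skeleton of $\Delta$ being a subgraph of $G$. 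Step (iii) should give the clean constant $kp$ precisely because it concerns disks and can bypass the Grid Minor Theorem.

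The main obstacle is step (i): an $\mathbb{F}_2$-minimal filling of a cycle need not be an embedded disk --- the support of $D$ may branch, wrap around, or reuse vertices. Extracting enough disk-like planar structure from the hypotheses, or --- perhaps more robustly --- building the bramble of order $k+1$ directly from the chain $D$, tracking only how the cells sit around $C$ (using that consecutive cells along $C$ share an edge of $C$ and that each cell meets $C$ in a short arc), is where essentially all the work lies. It is also here that the full strength of ``the whole cycle space is generated by short cycles'' is needed: the extra generators are what let one simplify and re-route the filling enough to carry out the grid-like sweep, and this is what buys the factor of two over Theorem~\ref{main theorem lengths}.
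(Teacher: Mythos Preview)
There is no proof in this paper to compare against: Theorem~\ref{bounding ctw} is quoted from \cite{HW16sidma} and not reproved here. The paper even stresses that Theorem~\ref{main theorem lengths} does \emph{not} imply Theorem~\ref{bounding ctw} because of the constants, and that ``the proofs are also quite different''; the only hint about the original argument is that Lemma~\ref{using cycle gen} was ``inspired by a similar parity-argument'' in \cite{HW16sidma}. So your proposal cannot be matched against anything in the present paper, and the honest comparison is with a parity/separator argument in the spirit of Lemma~\ref{using cycle gen}, not with a disk-filling and bramble construction.

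On its own merits, what you have written is a plan, not a proof, and you identify the gap yourself: step~(i). An $\mathbb{F}_2$-minimal family of short cycles summing to $C$ need not assemble into anything like a planar disk with boundary $C$; vertices and edges may be reused, the nerve of the family need not be simply connected, and there is no obvious mechanism by which the hypothesis ``the whole cycle space is generated by short cycles'' repairs this (that hypothesis gives you more generators to swap in, but swapping does not obviously planarise the picture). Without (i), step~(iv) collapses too: if $\Delta$ is not literally a subgraph of $G$ you cannot conclude $\tw(G)\ge\tw(\Delta)$. Your fallback --- building a bramble directly from the chain by tracking how cells sit along $C$ --- is more promising, but as stated it is a hope rather than an argument, and you give no indication of what the bramble elements would be or why no $k$ vertices hit them all. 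The ``purely local'' claim that $V(Z)\cap V(C)$ lies in an arc of length at most $p$ is essentially right (diameter of $Z$ is at most $\lfloor p/2\rfloor$ and $C$ is geodesic), but the subsequent combinatorics is where everything happens, and none of it is supplied. If you want the sharp constant $kp$, you should look at the actual argument in \cite{HW16sidma}; the machinery of the present paper loses exactly the factor of two you are trying to recover.
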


     	It should be noted that Theorem~\ref{bounding ctw} is not implied by Theorem~\ref{main theorem lengths}, as the constant factors are different. In fact, the proofs are also quite different, although Lemma~\ref{using cycle gen} below was inspired by a similar parity-argument in~\cite{HW16sidma}.
     	
     	It is tempting to think that, conversely, Theorem~\ref{main theorem lengths} could be deduced from Theorem~\ref{bounding ctw} by adequate manipulation of the graph~$G$, but we have not been successful with such attempts.
    
     	\end{section}

     \begin{section}{Proof of Theorem~\ref{main theorem lengths}}
     
         	The relation to tree-width is established via a well-known separation property of graphs of bounded tree-width, due to Robertson and Seymour~\cite{GMII}.
     	
     	\begin{lemma}[\cite{GMII}] \label{tw separator}
		 Let~$k$ be a positive integer, $G$ a graph and $A \sub V(G)$. If the tree-width of~$G$ is less than~$k$, then there exists $X \sub V(G)$ with $|X| \leq k$ such that every component of $G - X$ contains at most $|A \setminus X|/2$ vertices of~$A$.
		\end{lemma}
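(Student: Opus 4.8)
The plan is to locate the set $X$ inside a tree-decomposition of $G$ witnessing $\tw(G) < k$, by an orientation argument on the decomposition tree. Fix a tree-decomposition $(T, (V_t)_{t \in V(T)})$ of $G$ of width less than $k$, so $|V_t| \le k$ for every node $t$. For an edge $e = t_1 t_2$ of $T$ let $T_1, T_2$ be the components of $T - e$ with $t_i \in T_i$, put $U_i := \bigcup_{s \in V(T_i)} V_s$, and set $S_e := V_{t_1} \cap V_{t_2}$. Standard properties of tree-decompositions give $U_1 \cup U_2 = V(G)$, $U_1 \cap U_2 = S_e$, and that $S_e$ separates $U_1 \setminus S_e$ from $U_2 \setminus S_e$ in $G$; in particular $U_i \setminus V_t = U_i \setminus S_e$ when $t \in T_{3-i}$. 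Writing $W_i(e) := A \cap (U_i \setminus S_e)$, the sets $W_1(e)$, $W_2(e)$ and $A \cap S_e$ partition $A$, so $|W_1(e)| + |W_2(e)| = |A| - |A \cap S_e| = |A \setminus S_e|$. If $|A| \le k$ we are done by taking $X := A$, so assume $|A| > k$.

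Next I would orient each edge $e = t_1 t_2$ of $T$ towards $t_2$ if $|W_2(e)| > |W_1(e)|$, towards $t_1$ if $|W_1(e)| > |W_2(e)|$, and call $e$ \emph{balanced} if $|W_1(e)| = |W_2(e)|$. If some edge $e$ is balanced, then $|W_1(e)| = |W_2(e)| = \tfrac12 |A \setminus S_e|$; taking $X := S_e$ (so $|X| \le |V_{t_1}| \le k$), every component $C$ of $G - S_e$ lies wholly in $U_1 \setminus S_e$ or wholly in $U_2 \setminus S_e$, whence $|A \cap C| \le \tfrac12 |A \setminus S_e| = \tfrac12 |A \setminus X|$, as required. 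If no edge is balanced, every edge is oriented; since $T$ is a finite tree and admits no directed cycle, following the orientation out of an arbitrary node reaches a node $t$ all of whose incident edges point towards it. For each neighbour $t'$ of $t$, with $e = tt'$ and $T'$ the component of $T - e$ containing $t'$, every component $C$ of $G - V_t$ meeting $U_{T'} \setminus V_t$ is contained in $U_{T'} \setminus V_t = U_{T'} \setminus S_e$, so $|A \cap C| \le |W_{T'}(e)| < \tfrac12 |A \setminus S_e|$ because $e$ points towards $t$; and every component of $G - V_t$ is of this form for a unique neighbour $t'$.

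The remaining task — and the step I expect to be the main obstacle — is to improve the bound $\tfrac12 |A \setminus S_e|$ to the required $\tfrac12 |A \setminus V_t|$, which is smaller since $S_e \subseteq V_t$, or to modify the choice of $X$ accordingly. Here I would first observe that $G - V_t$ has \emph{at most one} component $C_0$ with $|A \cap C_0| > \tfrac12 |A \setminus V_t|$, since two such components, being disjoint subsets of $V(G) \setminus V_t$, would between them carry more than all of $A \setminus V_t$; if there is no such component, $V_t$ is the desired $X$. Otherwise there is a unique such $C_0$, lying in a single branch $B$ of $T$ at $t$ reached via an edge $e_0$, and combining $|A \cap C_0| > \tfrac12|A\setminus V_t|$ with $|A \cap C_0| < \tfrac12 |A \setminus S_{e_0}|$ forces an $A$-vertex of $V_t$ to lie outside $V_{t'}$ for the corresponding neighbour $t'$, hence in no bag of $B$. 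I would then recurse on the strictly smaller graph $G[C_0]$ with the set $A \cap C_0$ (whose size is strictly less than $|A|$, as $V_t$ meets $A$), and splice the resulting separator of $G[C_0]$ with an appropriate part of $V_t$ — discarding the $A$-vertices of $V_t$ that are ``wasted'' on the side of $S_{e_0}$ away from $C_0$, since these are non-adjacent to $C_0$ while $S_{e_0}$ still separates $C_0$ off. Arranging that this splice has size at most $k$ while preserving the exact $\tfrac12$-balance is the delicate point of the argument; a cleaner route, if it can be made to work, would be to choose the decomposition or the sink $t$ so that no $A$-vertex of $V_t$ lies on the wrong side of $S_{e_0}$ in the first place.
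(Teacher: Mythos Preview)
The paper does not prove this lemma at all; it is stated with a bare citation to Robertson and Seymour~\cite{GMII} and then used as a black box. So there is no in-paper argument to compare against, and your attempt has to stand on its own.

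Your setup and the balanced-edge case are correct, and you are right that the sink bag $V_t$ need not meet the bound $\tfrac12|A\setminus V_t|$. Concretely: take $T$ with two nodes $t,t'$, bags $V_t=\{s,a_1,a_2,a_3\}$ and $V_{t'}=\{s,b_1,b_2\}$, let $G$ be the star with centre $s$ together with the extra edge $b_1b_2$, and set $A=\{a_1,a_2,a_3,b_1,b_2\}$. Then $|W_t(e)|=3>2=|W_{t'}(e)|$, so $t$ is the sink, yet $G-V_t=\{b_1,b_2\}$ is a single component carrying $2>|A\setminus V_t|/2=1$ vertices of $A$. (Here $X=S_e=\{s\}$ or $X=V_{t'}$ would work, but your procedure selects neither.)

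The genuine gap is the ``splice''. Recursing on $G[C_0]$ with $A\cap C_0$ returns some $X'\subseteq C_0$ of size up to $k$; to cut $C_0$ off from the rest of $G$ you must also keep at least $S_{e_0}$, and nothing forces $|X'\cup S_{e_0}|\le k$. Your idea of discarding the $A$-vertices of $V_t\setminus S_{e_0}$ does not help with the size, since those vertices are not in $X'\cup S_{e_0}$ anyway; and the recursion changes both the ambient graph and the target quantity $|A\setminus X|/2$, so the inductive bound you obtain on $G[C_0]$ is relative to $A\cap C_0$, not to $A$. As written the argument is genuinely incomplete. If you want to salvage the orientation method, one workable variant is to orient $t_1\to t_2$ whenever some component of $G-V_{t_1}$ on the $t_2$-side already exceeds $\tfrac12|A\setminus V_{t_1}|$; a sink for \emph{this} orientation is immediately a valid $X$, and the case where an edge receives both orientations has to be handled separately by exhibiting a suitable $X$ inside $V_{t_1}\cup V_{t_2}$.
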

     
		It is not hard to see that Theorem~\ref{main theorem lengths} can be reduced to the case where $\ell \equiv 1$. This case is treated in the next theorem.

          \begin{theorem} \label{precise theorem}
     	Let $k, p$ be positive integers. Let~$G$ be a graph containing a geodesic cycle~$C$ of length at least $4\lfloor p/ 2 \rfloor k$, which is the $\mathbb{F}_2$-sum of cycles of length at most~$p$. Then for every $X \sub V(G)$ of order at most~$k$, some component of $G - X$ contains at least half the vertices of~$C$.
     \end{theorem}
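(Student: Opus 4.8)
\emph{Strategy.} I would argue by contradiction. Suppose there is a set $X\subseteq V(G)$ with $|X|\le k$ such that every component of $G-X$ meets $V(C)$ in fewer than $\ell(C)/2$ vertices, and aim to violate the geodecity of~$C$; write $L:=\ell(C)=|V(C)|$. First a harmless cleanup: deleting every vertex of~$G$ lying on none of the short cycles only increases distances (so~$C$ stays geodesic), leaves unchanged the fact that~$C$ is the $\mathbb{F}_2$-sum of the given cycles, and can only refine the components of $G-X$; so assume every vertex of~$G$ lies on one of the short cycles. Put $Y:=X\cap V(C)$ and $j:=|Y|\le k$. If $j=0$ then~$C$ is connected in $G-X$ and lies in a single component, which then contains all $L\ge L/2$ vertices of~$C$ --- a contradiction. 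Hence $j\ge 1$, and~$Y$ splits~$C$ into arcs $A_1,\dots,A_j$ in cyclic order, each contained in one component of $G-X$. Since $j\le k\le L/4<L/2$, the total number $\sum_i|V(A_i)|=L-j$ of vertices of~$C$ in the arcs exceeds $L/2$, so the $A_i$ do not all lie in the same component of $G-X$.

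\emph{The crux: rerouting via short cycles.} Two facts are to be combined. The first is a flatness property forced by geodecity: if a cycle~$D$ of length at most~$p$ meets~$C$ in a path~$P$ whose complement $Q:=D-E(P)$ is internally disjoint from~$C$, then $d_G(a,b)\le\ell(Q)$ for the ends $a,b$ of~$P$, while $d_C(a,b)=\min\{\ell(P),\,L-\ell(P)\}$; in the relevant range this minimum equals $\ell(P)$, so $\ell(P)\le\ell(Q)$ and hence $\ell(P)\le\tfrac12\ell(D)\le\lfloor p/2\rfloor$. The second ingredient is the role of Lemma~\ref{using cycle gen}: its parity argument, applied to the relation $C=\sum_iC_i$ together with the colouring of $V(G)$ by the components of $G-X$, converts the bad separator into short, well-placed detours --- concretely, for a prescribed vertex $x\in Y$ the short cycles through~$x$ can be spliced at~$x$ into a single walk of $G-x$ between the two $C$-neighbours of~$x$, and more globally one replaces a sub-arc of~$C$ spanning $\Theta(L)$ vertices and meeting at most~$j$ vertices of~$Y$ by a walk of~$G$ of $\ell$-length at most about $j\cdot 2\lfloor p/2\rfloor$, the factor $2\lfloor p/2\rfloor$ coming from the flatness bound.

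\emph{Conclusion and the main obstacle.} Taking the sub-arc in the previous step to be essentially a half of~$C$, its ends $a,b$ satisfy $d_C(a,b)$ close to $L/2$, while the rerouting walk gives $d_G(a,b)\le j\cdot 2\lfloor p/2\rfloor\le k\cdot 2\lfloor p/2\rfloor\le L/2$; a careful count --- the arcs $A_i$ being nontrivial makes the rerouted half strictly longer than the sum of the detours replacing the $Y$-vertices --- turns this into $d_G(a,b)<d_C(a,b)$, contradicting that~$C$ is geodesic. (Through Lemma~\ref{tw separator} with $A=V(C)$, this is the mechanism behind Theorem~\ref{main theorem lengths}.) I expect the rerouting step to be the only genuine obstacle: a short cycle may cross~$C$ repeatedly and visit several components of $G-X$, so it is not clear a priori that the global relation $C=\sum_iC_i$ can be reorganised into short detours that respect the separator; Lemma~\ref{using cycle gen} is precisely the device that makes this possible, and its parity/cycle-generation argument (in the spirit of the one in~\cite{HW16sidma}) is the heart of the proof, with the rest being bookkeeping governed by the inequality $L\ge 4\lfloor p/2\rfloor k$.
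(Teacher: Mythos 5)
The central step of your argument is asserted, not proved, and it is not something Lemma~\ref{using cycle gen} provides. You claim that the parity argument ``converts the bad separator into short, well-placed detours'', concretely that a sub-arc of~$C$ spanning $\Theta(L)$ vertices and meeting at most $j\le k$ vertices of~$X$ can be replaced by a walk of~$G$ of length about $2j\lfloor p/2\rfloor$, yielding $d_G(a,b)<d_C(a,b)$ and contradicting geodecity. No argument is given for this, and Lemma~\ref{using cycle gen} cannot supply it: that lemma is a dichotomy for a family~$\R$ of \emph{pairwise disjoint, pairwise distant} vertex sets whose traces on~$C$ are connected --- either some $D\in\D$ meets two of them, or all of $C-\bigcup\R$ lies in a single component of $G-\bigcup\R$ --- and it produces no walks or detours at all. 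Likewise, the cycles of~$\D$ through a fixed $x\in X$ need not splice into a walk between the $C$-neighbours of~$x$: the relation $C=\bigoplus\D$ is global, individual members of~$\D$ may cross~$C$ and one another arbitrarily, and nothing forces the ones through~$x$ to sum to a local detour. You acknowledge this is ``the heart of the proof'', but it is precisely the part left unsubstantiated; the ``careful count'' at the end is likewise only asserted. A further unaddressed problem is that you cannot even invoke the parity lemma with the singletons of~$X$ as~$\R$: two vertices of~$X$ may be close together (or lie off~$C$), so a single short cycle may meet two of them, and the lemma's useful alternative is then unavailable.

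The missing idea is the paper's Lemma~\ref{extend separator}, and with it the proof is direct rather than by contradiction. Geodecity is not contradicted at the end; it is used constructively to enlarge~$X$ into a family~$\R$ of disjoint sets with $X\sub\bigcup\R\sub X\cup V(C)$, pairwise at distance greater than $d:=\lfloor p/2\rfloor$, each meeting~$C$ in a connected (possibly empty) arc, and with $|\bigcup\R\cap V(C)|\le 2dk$: one greedily groups the vertices of~$X$ into balls and, using that $C$ is geodesic, covers each ball's intersection with~$C$ by two short arcs of~$C$ around its centre, which is what keeps the total number of covered $C$-vertices at $2dk$. Since every $D\in\D$ has diameter at most~$d$, no~$D$ meets two sets of~$\R$, so Lemma~\ref{using cycle gen} places all of $C-\bigcup\R$ in one component of $G-\bigcup\R$, hence in one component of $G-X$, which therefore contains at least $|C|-2dk\ge|C|/2$ vertices of~$C$. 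Your ``flatness'' observation about a short cycle meeting~$C$ in a path plays no role in this scheme. Without an actual proof of your rerouting claim --- or the clustering-and-covering construction of Lemma~\ref{extend separator} that replaces it --- the proposal does not constitute a proof.
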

     

	\begin{proof}[Proof of Theorem~\ref{main theorem lengths}, assuming Theorem~\ref{precise theorem}]
	
	Let~$\D$ be a set of cycles of length at most~$r$ with $C = \bigoplus \D$. 
	
%
	
	Since~$\ell$ is rational-valued, we may assume that $r \in \mathbb{Q}$, as the premise also holds for~$r'$ the maximum $\ell$-length of a cycle in~$\D$. Take an integer~$M$ so that $rM$ and $\ell' (e) := M \ell(e)$ are natural numbers for every $e\in E(G)$.
	
	Obtain the subdivision~$G'$ of~$G$ by replacing every $e \in E(G)$ by a path of length~$\ell'(e)$. Denote by~$C', D'$ the subdivisions of~$C$ and $D \in \D$, respectively. Then $C' = \bigoplus_{D \in \D} D'$ and $|C'| = M \ell(C) \geq 2 (Mr) k$, while $|D'| = M \ell(D) \leq Mr$ for every $D \in \D$. By Theorem~\ref{precise theorem}, for every $X \sub V(G')$ with $|X | \leq k$ there exists a component of $G' - X$ that contains at least half the vertices of~$C'$. By Lemma~\ref{tw separator}, $G'$ has tree-width at least~$k$. Since tree-width is invariant under subdivision, the tree-width of~$G$ is also at least~$k$.
	\end{proof}

     Our goal is now to prove Theorem~\ref{precise theorem}. The proof consists of two separate lemmas. The first lemma involves separators and $\mathbb{F}_2$-sums of cycles.

	  \begin{lemma} \label{using cycle gen}
     	Let~$G$ be a graph, $C \sub G$ a cycle and~$\D$ a set of cycles in~$G$ such that $C = \bigoplus \D$. Let~$\R$ be a set of disjoint vertex-sets of~$G$ such that for every $R \in \R$, $R \cap V(C)$ is either empty or induces a connected subgraph of~$C$. Then either some $D \in \D$ meets two distinct $R, R' \in \R$ or there is a component~$Q$ of $G - \bigcup \R$ with $V(C) \sub V(Q) \cup \bigcup \R$.
     \end{lemma}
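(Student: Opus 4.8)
I would prove the statement in the form: \emph{if no $D \in \D$ meets two distinct members of $\R$, then some component $Q$ of $G - \bigcup\R$ satisfies $V(C) \sub V(Q) \cup \bigcup\R$}. Write $H := G - \bigcup\R$. The first thing to pin down is the position of $C$ relative to $\R$. As each $R \cap V(C)$ is connected, the footprints $I_R := R \cap V(C)$ are pairwise disjoint subpaths of $C$; removing them leaves the arcs $P_1, \dots, P_m$, the components of $C - \bigcup\R$. Were some edge of $C$ to join two distinct footprints $I_R, I_{R'}$, then (as $C = \bigoplus\D$) some $D \in \D$ would contain that edge and hence meet both $R$ and $R'$, contrary to hypothesis; so along $C$ the arcs and footprints strictly alternate. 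Thus there are exactly $m$ footprints $I_{R(1)}, \dots, I_{R(m)}$ with $R(1), \dots, R(m)$ \emph{pairwise distinct}, and each arc $P_i$, being connected and disjoint from $\bigcup\R$, lies in a single component $Q_{j(i)}$ of $H$. It now suffices to prove $j(1) = \dots = j(m)$: then $V(C) \sub V(Q_{j(1)}) \cup \bigcup\R$, so $Q := Q_{j(1)}$ works. (The degenerate situations $m \le 1$ or $V(C) \sub \bigcup\R$ are immediate — a single arc is already connected, and if $V(C)\sub\bigcup\R$ then by the alternation $V(C) \sub R$ for one $R$ — and we tacitly add an isolated vertex to $G$ should $H$ be empty.)

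The crux is a parity invariant attached to a \emph{single} member of $\R$. For $R \in \R$ and a component $Q_j$ of $H$, let $E(R, Q_j)$ be the set of $G$-edges with one end in $R$ and one end in $V(Q_j)$, and consider the $\mathbb{F}_2$-linear functional $S \mapsto |S \cap E(R, Q_j)| \bmod 2$ on edge sets of $G$. The key claim is that it vanishes on every $D \in \D$. If $D$ avoids $R$ this is trivial. Otherwise $D$ meets no further member of $\R$, so every edge of $D$ not incident to $R$ is an edge of $H$, and $D - R$ is a disjoint union of subpaths of the cycle $D$, each lying in one component of $H$; by maximality each such subpath is joined to $R$ by exactly two edges of $D$, so contributes $0$ or $2$ to $|E(D) \cap E(R, Q_j)|$, and the total is even. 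Feeding $C = \bigoplus\D$ through this functional gives $|E(C) \cap E(R, Q_j)| \equiv 0 \pmod 2$ for every $R$ and $j$.

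Finally, suppose the $j(i)$ are not all equal. Reading the arcs cyclically, pick $i$ with $j(i) \neq j(i+1)$. Among the edges of $C$, an edge within an arc has both ends in one component of $H$ and an edge within a footprint has both ends in one member of $\R$, so neither can lie in $E(R(i), Q_{j(i)})$; hence only the arc-to-footprint edges of $C$ can, and such an edge lies in $E(R(i), Q_{j(i)})$ only when incident to the footprint $I_{R(i)}$, the $R(i')$ being pairwise disjoint. Of the two edges of $C$ incident to $I_{R(i)}$, one runs to $P_i \sub Q_{j(i)}$ and the other to $P_{i+1} \sub Q_{j(i+1)} \neq Q_{j(i)}$. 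Hence $|E(C) \cap E(R(i), Q_{j(i)})| = 1$, contradicting the parity statement; so $j(1) = \dots = j(m)$, as required.

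The step I expect to be the genuine obstacle is discovering the right functional. Natural first guesses — counting how often $C$ crosses the edge-cut bounding a union of components of $H$ — are even for \emph{every} cycle and hence carry no information, which is precisely what makes the lemma non-trivial. The fix is to refine the $C$-to-$\bigcup\R$ boundary along the individual members of $\R$: this is innocuous for each $D \in \D$ because it touches only one member, but not for $C$, which may touch many, and it is exactly this asymmetry that forces all the arcs into a common component. The only fiddly point is verifying evenness on each $D$, which rests on the elementary structural fact about a cycle meeting $\bigcup\R$ within a single $R$ but is otherwise routine.
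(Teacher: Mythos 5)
Your proof is correct and follows essentially the same route as the paper: the same observation that $C$ has no edges between distinct members of $\R$, the same parity invariant ($|E(C)\cap E(R,Q)|$ is even because each $D\in\D$ meets at most one $R$ and crosses into any component evenly), and the same conclusion that consecutive arcs of $C-\bigcup\R$ must lie in a common component. The only difference is cosmetic: you argue by contradiction on two adjacent arcs, while the paper chains the arcs around the cycle directly.
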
		
     
     \begin{proof}
		Suppose that no $D \in \D$ meets two distinct $R, R' \in \R$. Then~$C$ has no edges between the sets in~$\R$: Any such edge would have to lie in at least one $D \in \D$. 	Let $Y := \bigcup \R$ and let~$\Q$ be the set of components of $G - Y$. 
		
		Let $Q \in \Q$, $R \in \R$ and $D \in \D$ arbitrary. If~$D$ has an edge between~$Q$ and~$R$, then~$D$ cannot meet $Y \setminus R$. Therefore, all edges of~$D$ between~$Q$ and $V(G) \setminus Q$ must join~$Q$ to~$R$. As~$D$ is a cycle, it has an even number of edges between~$Q$ and $V(G) \setminus Q$ and thus between~$Q$ and~$R$. As $C = \bigoplus \D$, we find
		\[
		e_C(Q,R) \equiv \sum_{D \in \C} e_D(Q,R) \equiv 0 \, \mod 2.
		\]
		For every $R \in \R$ which intersects~$C$, there are precisely two edges of~$C$ between~$R$ and $V(C) \setminus R$, because $R \cap C$ is connected. As mentioned above, $C$ contains no edges between~$R$ and $Y \setminus R$, so both edges join~$R$ to $V(G) \setminus Y$. But~$C$ has an even number of edges between~$R$ and each component of $V(G) \setminus Y$, so it follows that both edges join~$R$ to the same $Q(R) \in \Q$.
		
		Since every component of $C - (C \cap Y)$ is contained in a component of $G - Y$, it follows that there is a $Q \in \Q$ containing all vertices of~$C$ not contained in~$Y$.
		\end{proof}
     
     
   To deduce Theorem~\ref{precise theorem}, we want to apply Lemma~\ref{using cycle gen} to a suitable family~$\R$ with $\bigcup \R \supseteq X$ to deduce that some component of $G - X$ contains many vertices of~$C$. Here, $\D$ consists of cycles of length at most~$\ell$, so if the sets in~$\R$ are at pairwise distance~$>\! \lfloor \ell / 2 \rfloor$, then no $D \in \D$ can pass through two of them. The next lemma ensures that we can find such a family~$\R$ with a bound on $|\bigcup \R|$, when the cycle~$C$ is geodesic.

     			\begin{lemma} \label{extend separator}
		Let~$d$ be a positive integer, $G$ a graph, $X \sub V(G)$ and $C \sub G$ a geodesic cycle. Then there exists a family~$\R$ of disjoint sets of vertices of~$G$ with $X \sub \bigcup \R \sub X \cup V(C)$ and $| \bigcup \R \cap V(C) | \leq 2 d |X|$ such that for each $R \in \R$, the set $R \cap V(C)$ induces a (possibly empty) connected subgraph of~$C$ and the distance between any two sets in~$\R$ is greater than~$d$. 
	\end{lemma}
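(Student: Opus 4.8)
The plan is to carry out the argument mostly on the cycle $C$ itself, exploiting geodecity to move freely between distances in $G$ and distances along $C$, and to deal with vertices of $X$ away from $C$ by projecting them onto $C$.

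\smallskip
\emph{Preliminaries.} If $X=\emptyset$ we take $\R=\emptyset$, and if $|C|\le 2d|X|$ we take $\R=\{X\cup V(C)\}$: its unique member meets $V(C)$ in all of $C$, so it induces the connected subgraph $C$, the distance condition is vacuous, and $|\bigcup\R\cap V(C)|=|C|\le 2d|X|$. So we may assume $X\neq\emptyset$ and $|C|>2d|X|$. Since $C$ is geodesic, $d_G(u,v)$ equals the distance along $C$ for all $u,v\in V(C)$; write $d_C$ for the latter. I will also use the following observation about cycles: if $T\sub V(C)$ has diameter $D$ in $C$ and $|C|>2D$, then $T$ lies in an arc of $C$ of length at most $2D$. (Pick $a,b\in T$ at $C$-distance $D$ and let $P$ be the $a$--$b$ arc of length $D$. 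A vertex $c\in T\setminus P$ lies on the other $a$--$b$ arc; since $d_C(a,c)\le D$, $d_C(b,c)\le D$ and $|C|>2D$, a short case analysis pins the location of $c$ and shows $T$ fits into a single arc of length $\le 2D$.)

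\smallskip
\emph{The construction.} First suppose $X\sub V(C)$. Call two cyclically consecutive vertices of $X$ \emph{close} if the arc between them avoiding the rest of $X$ has length at most $2d$, and let $W$ be $X$ together with the interiors of all close arcs. There are $|X|$ cyclically consecutive pairs, each contributing at most $2d-1$ interior vertices, so $|W|\le|X|+|X|(2d-1)=2d|X|$. As a subset of $V(C)$, $W$ splits into maximal arcs, its \emph{blocks}, and any two blocks are separated by an arc of length $>2d$. Let $\R$ be the set of blocks: each block induces a connected subgraph of $C$; distinct blocks are at $C$-distance $>2d$, hence --- \emph{by geodecity} --- at $G$-distance $>2d>d$; the blocks are pairwise disjoint; and $\bigcup\R=W$ satisfies $X\sub W\sub V(C)$ and $|W\cap V(C)|\le 2d|X|$. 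For general $X$, let $S_1,\dots,S_c$ be the vertex sets of the components of the graph on $X$ in which $x\sim y$ iff $d_G(x,y)\le d$. If $d_G(S_i,V(C))>d$, make $S_i$ a cluster on its own; its trace is empty, and one checks it is automatically at distance $>d$ from $V(C)$, hence from every other cluster. For the remaining components, choose for each $x$ with $d_G(x,V(C))\le d$ a nearest vertex $\pi(x)$ of $C$; the projections of such an $S_i$ have $C$-diameter at most $(|S_i|+1)d$ (triangle inequality through two projections), so by the observation they lie in an arc $I_i$ of $C$. Finally group together the components whose arcs $I_i$ are close, use the blocks construction to fill the gaps within a group, and let each group together with its vertices of $X$ be a cluster.

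\smallskip
\emph{Where the difficulty lies.} The case $X\sub V(C)$ is routine; the real work is in the general case, and in particular in the vertices of $X$ that lie at distance $>d$ from $C$ but are linked through the auxiliary graph to vertices near $C$. Such a vertex can drag far-apart stretches of $C$ into one cluster, which threatens both the connectedness of the traces and the bound $|\bigcup\R\cap V(C)|\le 2d|X|$. Keeping this under control is precisely where geodecity is needed in both directions --- turning ``far along $C$'' into ``far in $G$'' to keep distinct clusters apart, and ``close in $G$'' into ``close along $C$'' to keep the filling step within budget --- and where the precise value $2d$ (twice the radius $d$ of the cycles one must avoid) is used. I expect that verifying, in the general case, that the clusters one obtains are pairwise at distance greater than $d$ while using at most $2d|X|$ vertices of $C$ is the main obstacle.
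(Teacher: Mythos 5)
Your reduction to $X \neq \emptyset$, $|C| > 2d|X|$ and your treatment of the special case $X \subseteq V(C)$ are fine, but the proof has a genuine gap exactly where you place it yourself: the general case is a plan, not an argument, and the general case is the whole content of the lemma. Three concrete problems with the sketch. First, your observation about sets of small diameter needs $|C| > 2D$ with $D = (|S_i|+1)d$, and this does not follow from the reduction $|C| > 2d|X|$ (for a single large component, $2(|S_i|+1)d$ can exceed $2d|X|$); in the borderline regime the enclosing arc has length up to $2(|S_i|+1)d$, which already breaks the per-component budget $2d|S_i|$. Second, one round of ``group together the components whose arcs $I_i$ are close'' does not give the required separation: a vertex $x$ of one group can be within distance $d$ of a vertex that was only \emph{filled in} between two arcs of another group, while $\pi(x)$ is not close to any original arc $I_j$ of that group, so the merge you rely on never triggers. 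To repair this you must iterate the merging over the filled arcs until it stabilises. Third --- and this is the crux --- neither for one round nor for the iterated closure do you verify the bound $|\bigcup \R \cap V(C)| \leq 2d|X|$: each merge fills a gap of length up to $2d$ (or more, once filled arcs are being compared), and you need a charging scheme that pays for every such stretch of $C$ with distinct vertices of $X$. Establishing that this cascading fill-and-merge process terminates with pairwise distance $>d$ \emph{and} within the $2d|X|$ budget is precisely the difficulty, and it is left unproved.

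For comparison, the paper resolves exactly this tension with a greedy ``range'' device rather than components-and-projections: for $y$ in the current set $X_k$ one takes the largest $j$ with at least $1+j$ vertices of $X_k$ within distance $jd$ of $y$, repeatedly picks $x_k \in X_k \cap V(C)$ of maximum range, removes the ball $B_k$, and covers $B_k \cap V(C)$ by two sub-paths of $C$ of length at most $j_k d$ each. The inequality $|B_k| \geq 1 + j_k$ makes each arc of length $\leq 2 j_k d + 1$ chargeable to the $|B_k|$ deleted vertices, giving $\sum_k |S_k| \leq 2d|X|$ at once, and the maximum-range choice (together with geodecity of $C$) is what yields the pairwise distance $>d$ between the resulting sets, with the leftover off-cycle vertices collected into one final set. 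Some mechanism of this kind --- tying the length of cycle you absorb to the number of vertices of $X$ you retire, uniformly over how the balls/components interleave along $C$ --- is what your proposal still needs.
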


	\begin{proof}
	Let $Y \sub V(G)$ and $y \in Y$. For $j \geq 0$, let~$B_Y^j(y)$ be the set of all $z \in Y$ at distance at most~$j d$ from~$y$. Since $|B_Y^0(y)| = 1$, there is a maximum number~$j$ for which $|B_Y^j(y)| \geq 1 + j$, and we call this $j = j_Y(y)$ the \emph{range of~$y$ in~$Y$}. Observe that every $z \in Y \setminus B^{j_Y(y)}$ has distance greater than $(j_Y(y)+1) d$ from~$y$.
	
	Starting with $X_1 := X$, repeat the following procedure for $k \geq 1$. If $X_k \cap V(C)$ is empty, terminate the process. Otherwise, pick an $x_k \in X_k \cap V(C)$ of maximum range in~$X_k$. Let $j_k := j_{X_k}(x_k)$ and $B_k := B_{X_k}^{j_k}(x_k)$. Let $X_{k+1} := X_k \setminus B_k$ and repeat.
	
	Since the size of~$X_k$ decreases in each step, there is a smallest integer~$m$ for which $X_{m+1} \cap V(C)$ is empty, at which point the process terminates. By construction, the distance between~$B_k$ and~$X_{k+1}$ is greater than~$d$ for each $k \leq m$. For each $1 \leq k \leq m$, there are two edge-disjoint paths $P_k^1, P_k^2 \sub C$, starting at~$x_k$, each of length at most~$j_k d$, so that $B_k \cap V(C) \sub S_k := P_k^1 \cup P_k^2$. Choose these paths minimal, so that the endvertices of~$S_k$ lie in~$B_k$. Note that every vertex of~$S_k$ has distance at most $j_k d$ from~$x_k$. Therefore, the distance between $R_k := B_k \cup S_k$ and~$X_{k+1}$ is greater than~$d$.
	
	We claim that the distance between~$R_k$ and~$R_{k'}$ is greater than~$d$ for any $k < k'$. Since $B_{k'} \sub X_{k+1}$, it is clear that every vertex of~$B_{k'}$ has distance greater than~$d$ from~$R_k$. Take a vertex $q \in S_{k'} \setminus R_{k'}$ and assume for a contradiction that its distance to~$R_k$ was at most~$d$. Then the distance between~$x_k$ and~$q$ is at most $(j_k + 1)d$. Let $a, b \in B_{k'}$ be the endvertices of~$S_{k'}$. If $x_k \notin S_{k'}$, then one of~$a$ and~$b$ lies on the shortest path from~$x_k$ to~$q$ within~$C$ and therefore has distance at most~$(j_k+1) d$ from~$x_k$. But then, since~$j_k$ is the range of~$x_k$ in~$X_k$, that vertex would already lie in~$B_k$, a contradiction. Suppose now that $x_k \in S_{k'}$. Then~$x_k$ lies on the path in~$S_{k'}$ from~$x_k$ to one of~$a$ or~$b$, so the distance between~$x_k$ and~$x_{k'}$ is at most $j_{k'} d$. Since $x_{k'} \in X_k \cap V(C)$, it follows from our choice of~$x_k$ that 
	\[
	j_k = j_{X_k}(x_k) \geq j_{X_k}(x_{k'}) \geq j_{X_{k'}}(x_{k'}) = j_{k'} ,
	\]
	where the second inequality follows from the fact that $X_{k'} \sub X_k$ and $j_Y(y) \geq j_{Y'}(y)$ whenever $Y \supseteq Y'$. But then $x_{k'} \in B_k$, a contradiction. This finishes the proof of the claim.
	
	Finally, let $\R := \{ R_k \colon 1 \leq k \leq m \} \cup \{ X_{m+1} \}$. The distance between any two sets in~$\R$ is greater than~$d$. For $k \leq m$, $R_k \cap V(C) = S_k$ is a connected subgraph of~$C$, while $X_{m+1} \cap V(C)$ is empty. Moreover,
			\begin{align*}
			| \bigcup \R \cap V(C)| &= \sum_{k=1}^m |S_k| \leq \sum_{k=1}^m  (1 + 2 j_k d) \\
								&\leq \sum_{k=1}^m (1 + 2(|B_k|-1) d ) \\
								&\leq \sum_{k=1}^m 2 |B_k| d \leq 2 d |X|  .
		\end{align*}
	\end{proof}

		\begin{proof}[Proof of Theorem~\ref{precise theorem}]
		Let $X \sub V(G)$ of order at most~$k$ and let $d := \lfloor p /2 \rfloor$. By Lemma~\ref{extend separator}, there exists a family~$\R$ of disjoint sets of vertices of~$G$ with $X \sub \bigcup \R \sub X \cup V(C)$ and $| \bigcup \R \cap V(C)| \leq 2 d k$ so that for each $R \in \R$, the set $R \cap V(C)$ induces a (possibly empty) connected subgraph of~$C$ and the distance between any two sets in~$\R$ is greater than~$d$.
		
		Let~$\D$ be a set of cycles of length at most~$p$ with $C = \bigoplus \D$. Then no $D \in \D$ can meet two distinct $R, R' \in \R$, since the diameter of~$D$ is at most~$d$. By Lemma~\ref{using cycle gen}, there is a component~$Q$ of $G - \bigcup \R$ which contains every vertex of $C \setminus \bigcup \R$. This component is connected in $G - X$ and therefore contained in some component~$Q'$ of $G - X$, which then satisfies
		\[
		|Q' \cap V(C)| \geq  |C| - |\bigcup \R \cap V(C)| \geq |C| - 2 d k .
		\]
		Since $|C| \geq 4 d k$, the claim follows.
\end{proof}

     \end{section}

     \begin{section}{Remarks}
     
     We have described the content of Theorem~\ref{main theorem lengths} as an \emph{algebraic} criterion for a graph to have large tree-width. The reader might object that the cycle~$C$ being $\ell$-geodesic is a metric property and not an algebraic one. Karl Heuer has pointed out to us, however, that geodecity of a cycle can be expressed as an algebraic property after all. This is a consequence of a more general lemma of Gollin and Heuer~\cite{geodesiccuts}, which allowed them to introduce a meaningful notion of geodecity for cuts.
     
     \begin{proposition}[\cite{geodesiccuts}]
     	Let~$G$ be a graph with length-function~$\ell$ and $C \sub G$ a cycle. Then~$C$ is $\ell$-geodesic if and only if there do not exist cycles $D_1, D_2$ with $\ell(D_1), \ell(D_2) < \ell(C)$ such that $C = D_1 \oplus D_2$.
     \end{proposition}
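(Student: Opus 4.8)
The plan is to prove the two implications separately, each in contrapositive form: \textbf{(i)} if $C$ is not $\ell$-geodesic, then $C = D_1 \oplus D_2$ for cycles $D_1, D_2$ with $\ell(D_1), \ell(D_2) < \ell(C)$; and \textbf{(ii)} if $C = D_1 \oplus D_2$ for such cycles, then $C$ is not $\ell$-geodesic. Throughout, write $d_C^{\ell}$ for the shortest-path metric of the weighted cycle $C$ on its vertex set; then $C$ is $\ell$-geodesic precisely when $d_C^{\ell}(a,b) = d_G^{\ell}(a,b)$ for all $a,b \in V(C)$, and $d_C^{\ell}(a,b)$ equals the $\ell$-length of the shorter of the two arcs of $C$ between $a$ and $b$.

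For (i), non-geodecity of $C$ gives distinct $a,b \in V(C)$ and an $a$--$b$ path $\pi$ in $G$ with $\ell(\pi) < d_C^{\ell}(a,b)$. Among all such triples $(a,b,\pi)$ I would pick one for which $\pi$ has the fewest edges. The key point is that $\pi$ is then internally disjoint from $C$: if some interior vertex $c$ of $\pi$ lay on $C$, split $\pi$ at $c$ into paths $\pi_1,\pi_2$, each with fewer edges than $\pi$; from $\ell(\pi_1)+\ell(\pi_2) = \ell(\pi) < d_C^{\ell}(a,b) \le d_C^{\ell}(a,c)+d_C^{\ell}(c,b)$, one of $(a,c,\pi_1)$ or $(c,b,\pi_2)$ would be a valid triple with strictly fewer edges -- a contradiction. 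Being internally disjoint, $\pi$ is also edge-disjoint from $C$, so writing $P_1,P_2$ for the two arcs of $C$ between $a$ and $b$, the sets $D_1 := E(P_1)\cup E(\pi)$ and $D_2 := E(P_2)\cup E(\pi)$ are cycles, $D_1 \oplus D_2 = E(C)$ (the two copies of $\pi$ cancel), and $\ell(D_i) = \ell(P_i)+\ell(\pi) < \ell(P_1)+\ell(P_2) = \ell(C)$ since $\ell(\pi) < d_C^{\ell}(a,b) \le \ell(P_{3-i})$.

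For (ii), suppose $C = D_1\oplus D_2$ with $\ell(D_1),\ell(D_2) < \ell(C)$ and assume, for a contradiction, that $C$ is $\ell$-geodesic. Put $A := E(D_1)\cap E(C)$, $B := E(D_2)\cap E(C)$ and $F := E(D_1)\cap E(D_2)$; then $E(C) = A\sqcup B$, $F$ is disjoint from $E(C)$, $E(D_1) = A\sqcup F$, $E(D_2) = B\sqcup F$, and from $\ell(D_1),\ell(D_2) < \ell(C) = \ell(A)+\ell(B)$ we get $\ell(F) < \ell(A)$ and $\ell(F) < \ell(B)$; in particular $A,B,F$ are nonempty. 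Going once around the cycle $D_1$, its edge set $A\sqcup F$ splits into maximal runs of $A$-edges -- which are exactly the maximal arcs of $C$ contained in $A$, each occurring once -- alternating with maximal runs $\phi_1,\dots,\phi_t$ of $F$-edges, where each $\phi_j$ is a path in $G$ joining two vertices $p_j,q_j\in V(C)$. Geodecity gives $\ell(\phi_j) \ge d_G^{\ell}(p_j,q_j) = d_C^{\ell}(p_j,q_j)$, so there is an arc $\gamma_j\sub C$ between $p_j$ and $q_j$ with $\ell(\gamma_j) \le \ell(\phi_j)$; replacing each $\phi_j$ by $\gamma_j$ in $D_1$ turns it into a closed walk $W$ that uses only edges of $C$ and has $\ell(W) \le \ell(A)+\sum_j\ell(\phi_j) = \ell(D_1) < \ell(C)$. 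Now for $e\in E(C)$ let $c(e)$ be the number of times $W$ traverses $e$; as $W$ is a closed walk, $\{e\in E(C): c(e)\text{ odd}\}$ is an even subgraph of $C$ and hence equals $\emptyset$ or $E(C)$. If it is $E(C)$, then $c(e)\ge 1$ for all $e$ and $\ell(W) = \sum_{e\in E(C)}c(e)\ell(e) \ge \ell(C)$, contradicting $\ell(W) < \ell(C)$. If it is $\emptyset$, then every $c(e)$ is even, and since each $A$-arc contributes exactly $1$ to $c(e)$ for each of its edges, every edge of $A$ must be covered an odd, hence positive, number of times by the arcs $\gamma_j$; thus $\ell(A) \le \sum_j\ell(\gamma_j) \le \sum_j\ell(\phi_j) = \ell(F)$, contradicting $\ell(F) < \ell(A)$.

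The main obstacle is implication (ii): one has to see that $D_1$ decomposes into arcs of $C$ alternating with $F$-subpaths, perform the arc-substitution producing the closed walk $W$, and then use that the cycle space of $C$ is spanned by $E(C)$ alone to force the clean dichotomy ``$W$ covers every edge of $C$, or $W$ covers every edge of $A$''. By contrast, implication (i) becomes routine once one thinks of minimising the number of edges of $\pi$ and invoking the triangle inequality for $d_C^{\ell}$.
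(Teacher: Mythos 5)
The paper itself does not prove this proposition: it is quoted from Gollin and Heuer \cite{geodesiccuts} without proof, so there is no in-paper argument to compare yours against; I can only judge your argument on its own terms, and it is correct. Direction (i) is fine: choosing, among all witnesses of non-geodecity, a path $\pi$ with fewest edges and using the triangle inequality for $d_C^{\ell}$ does force $\pi$ to be internally disjoint from $C$, and then $P_1\cup\pi$, $P_2\cup\pi$ are cycles summing to $C$ with the required length bounds. The only point you leave implicit is the degenerate case in which $\pi$ is a single edge that is itself an edge of $C$ (the one way an internally disjoint path can fail to be edge-disjoint); it is excluded because then $\ell(\pi)\geq d_C^{\ell}(a,b)$, and it is worth a half-sentence. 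Direction (ii), the substantial one, is sound: the decomposition of $D_1$ into maximal $A$-arcs alternating with maximal $F$-paths whose endpoints lie on $C$, the replacement of each $F$-path $\phi_j$ by a geodesic arc $\gamma_j$ of $C$, and the parity dichotomy for the resulting closed walk $W$ (the odd-multiplicity edges form an element of the cycle space of $C$, hence $\emptyset$ or $E(C)$) are all valid, and the two cases contradict $\ell(W)\leq\ell(D_1)<\ell(C)$ and $\ell(F)<\ell(A)$ respectively. One small inaccuracy: the nonemptiness of $F$ does not follow from the inequalities $\ell(F)<\ell(A)$ and $\ell(F)<\ell(B)$, as your ``in particular'' suggests; it needs the one-line observation that if $F=\emptyset$ then $E(D_1)\sub E(C)$, so $D_1=C$ (a cycle contains no other cycle), contradicting $\ell(D_1)<\ell(C)$. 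Neither point is a genuine gap; both are routine to patch.
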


	     	Finally, we'd like to point out that Theorem~\ref{main theorem lengths} does not only offer a 'one-way criterion' for large tree-width, but that it has a qualitative converse. First, we recall the Grid Minor Theorem of Robertson and Seymour~\cite{GMV}, phrased in terms of walls . For a positive integer~$t$, an \emph{elementary $t$-wall} is the graph obtained from the $2t \times t$-grid as follows. Delete all edges with endpoints $(i,j), (i,j+1)$ when~$i$ and~$j$ have the same parity. Delete the two resulting vertices of degree one. A \emph{$t$-wall} is any subdivision of an elementary $t$-wall. Note that the $(2t \times 2t)$-grid has a subgraph isomorphic to a $t$-wall.
	     	
	     	\begin{theorem}[Grid Minor Theorem~\cite{GMV}]
	     		For every~$t$ there exists a~$k$ such that every graph of tree-width at least~$k$ contains a $t$-wall.
	     	\end{theorem}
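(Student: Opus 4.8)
This is the Grid Minor Theorem of Robertson and Seymour, quoted here as a black box; rather than reproduce its long proof, I describe the line of attack common to all known proofs. The first step is to replace the hypothesis $\tw(G) \geq k$ by a workable combinatorial witness. Via the duality between tree-width and brambles, or more directly by analysing a tree-decomposition of minimum width, one extracts from large tree-width a vertex set $A$, with $|A|$ large in terms of $k$, that is \emph{well-linked}: any two disjoint subsets of $A$ of the same size can be joined by that many vertex-disjoint paths in $G$ (equivalently, one produces an ``externally highly connected'' set, or a large ``highly connected set'' in the sense of Diestel, Gorbunov, Jensen and Thomassen). This reduction is the soft part of the argument, and it is where the very weak dependence of $k$ on $t$ is born.

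From a large well-linked set the problem becomes purely combinatorial: build a crossing pattern of paths inside $G$. The plan is to construct, by repeated applications of Menger's theorem routed through $A$, two families of paths $P_1,\dots,P_m$ and $Q_1,\dots,Q_m$, each family internally disjoint, and then to prune and reroute them so that every $P_i$ meets every $Q_j$ in a single segment and the incidence pattern is genuinely grid-like. Controlling this pattern relies on a Ramsey/pigeonhole cleanup applied to the cyclic order in which the $Q_j$ hit a given $P_i$. Once such a crossing path system is in hand, the horizontal paths $P_i$ and the vertical paths $Q_j$ realise a subdivision of the elementary $t$-wall after deleting the surplus crossings and the dangling ends; the parity-based edge deletions in the definition of the elementary wall are precisely what a clean reroute produces, which is why a wall---rather than the full grid---is the natural output. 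Passing between a $G_{2t}$-minor and a $t$-wall subgraph is then the routine fact, already noted above, that the $(2t\times 2t)$-grid contains a $t$-wall.

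The technical heart, and the main obstacle, is the crossing-extraction step: converting ``many disjoint paths exist across every balanced bipartition of $A$'' into ``two path families that interleave to form a grid''. Done carelessly the paths may cross chaotically or hardly at all, and forcing the grid pattern demands a delicate inductive construction; this is the long portion of the original Robertson--Seymour proof and the source of its tower-type bound on $k$. For our purposes only the bare existence of some $k = k(t)$ matters, so one may instead invoke the comparatively short proof of Diestel, Gorbunov, Jensen and Thomassen, which goes through the external connectivity of the parts of a tree-decomposition and yields a bound of single-exponential type in a polynomial of $t$; the far stronger polynomial bounds of Chekuri--Chuzhoy and of Chuzhoy--Tan, obtained through the ``path-of-sets system'' machinery, are well beyond what is needed here, where the theorem serves only to furnish the qualitative converse of Theorem~\ref{main theorem lengths}.
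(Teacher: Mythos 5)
The paper does not prove this statement at all: it is quoted verbatim as the Grid Minor Theorem of Robertson and Seymour with a citation, exactly as you treat it, so your black-box stance coincides with the paper's approach. Your accompanying sketch of the known proofs (well-linked sets, crossing path systems, the Diestel--Gorbunov--Jensen--Thomassen argument, and the later polynomial bounds) is accurate as a survey, and since only the existence of some $k$ depending on $t$ is used in the paper's corollary, nothing more is required here.
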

	     	
	     	Here, then, is our qualitative converse to Theorem~\ref{main theorem lengths}, showing that the algebraic condition in the premise of Theorem~\ref{main theorem lengths} in fact captures tree-width. 	
	     	
	     	\begin{corollary}
	     	For every~$L$ there exists a~$k$ such that for every graph~$G$ the following holds. If~$G$ has tree-width at least~$k$, then there exists a rational length-function on~$G$ so that~$G$ contains a $\ell$-geodesic cycle~$C$ with $\ell(C) \geq L$ which is the $\mathbb{F}_2$-sum of cycles of $\ell$-length at most~$1$.
	     	\end{corollary}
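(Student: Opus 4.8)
The plan is to combine the Grid Minor Theorem with an explicit length-function supported on a large wall inside~$G$. Given~$L$ (which we may assume to be at least~$1$), put $\beta := 1/6$ and $\alpha := \beta/c_0$, where~$c_0 \geq 1$ is the absolute constant supplied by the combinatorial fact stated at the end of this proof; set $t := \lceil L/\alpha \rceil$ and let~$k$ be the integer furnished by the Grid Minor Theorem for this~$t$. As~$k$ depends only on~$t$ and~$t$ only on~$L$, this~$k$ is as required. Suppose~$G$ has tree-width at least~$k$, so that~$G$ contains a $t$-wall~$W$, i.e.\ a subdivision of the elementary $t$-wall~$W_0$. For an edge~$f$ of~$W_0$ let~$P_f$ be the path of~$W$ subdividing it, and call~$f$ (and every edge of~$P_f$) \emph{boundary} or \emph{inner} according as~$f$ lies on the perimeter cycle~$C_0$ of~$W_0$ or not; write~$C$ for the perimeter cycle of~$W$, i.e.\ the subdivision of~$C_0$. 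The inner faces of~$W_0$ are hexagons, and the boundaries of all faces of the $2$-connected plane graph~$W_0$ sum to zero in its cycle space, so~$C_0$ is the $\mathbb{F}_2$-sum of those hexagons; lifting this identity along the subdivision, the cycle~$C$ is the $\mathbb{F}_2$-sum of the family~$\D$ of their subdivisions in~$W$.

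Define a length-function~$\ell$ on~$G$ by $\ell(e) := \alpha/|P_f|$ for~$e$ on a boundary path~$P_f$, $\ell(e) := \beta/|P_f|$ for~$e$ on an inner path~$P_f$, and $\ell(e) := \alpha|C_0|$ for $e \notin E(W)$; this is positive and rational, and $\alpha|C_0| = \ell(C)$ by the first rule. Each $D \in \D$ is the subdivision of a hexagonal face~$D_0$ of~$W_0$, so $\ell(D) = \sum_{f \in E(D_0)} \ell(P_f) \leq 6\beta = 1$, because $\ell(P_f) \in \{\alpha,\beta\}$ and $\alpha \leq \beta$. Also $\ell(C) = \alpha|C_0| \geq \alpha t \geq L$, since the perimeter of the elementary $t$-wall has at least~$t$ edges. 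Thus it remains only to prove that~$C$ is $\ell$-geodesic in~$G$; the Corollary then follows.

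Fix $a,b \in V(C)$; we have $d_C^\ell(a,b) \leq \ell(C)/2$, and $d_G^\ell(a,b) \leq d_C^\ell(a,b)$ trivially, so it suffices to prove $d_G^\ell(a,b) \geq d_C^\ell(a,b)$. Let~$P$ be an $a$--$b$ path in~$G$. If~$P$ uses an edge outside~$W$ then $\ell(P) \geq \alpha|C_0| = \ell(C) \geq d_C^\ell(a,b)$; so assume $P \sub W$, and we show $\ell(P) \geq d_C^\ell(a,b)$ in this case too. Since~$\ell$ assigns total length~$\alpha$ to each boundary path and total length~$\beta$ to each inner path, the standard behaviour of subdivisions yields $d_W^\ell(u,v) = d_{W_0}^\lambda(u,v)$ for all branch vertices~$u,v$ of~$W$, where~$\lambda$ is the length-function on~$W_0$ assigning~$\alpha$ to boundary edges and~$\beta$ to inner edges. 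A routine concatenation argument inside the cycle~$C$ --- using that any walk in~$C$ between two of its vertices has $\ell$-length at least their distance in~$C$ --- then reduces the required inequality to the statement that~$C_0$ is $\lambda$-geodesic in~$W_0$, i.e.\ that for all $u,v \in V(C_0)$ some arc of~$C_0$ is a shortest $u$--$v$ path in~$W_0$ with respect to~$\lambda$.

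This last statement is the main obstacle, and it follows from the promised combinatorial fact, whose constant is the~$c_0$ used above: \emph{there is an absolute constant~$c_0 \geq 1$ such that every $u$--$v$ path~$Q$ in~$W_0$ with $u,v \in V(C_0)$ uses at least $(d_{C_0}(u,v)-p)/c_0$ inner edges, where~$p$ is the number of boundary edges of~$Q$ and $d_{C_0}$ denotes distance in the cycle~$C_0$.} Granting this, if~$Q$ has~$p$ boundary and~$i$ inner edges then $\lambda(Q) = \alpha p + \beta i = \alpha(p + c_0 i) \geq \alpha\, d_{C_0}(u,v) = d_{C_0}^\lambda(u,v)$, so the shorter arc of~$C_0$ from~$u$ to~$v$ is $\lambda$-geodesic, as needed. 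The combinatorial fact expresses that a path threading through the interior of~$W_0$ can only be a bounded-factor short-cut between boundary vertices; I expect it to be provable, with $c_0 = 3$, by a routine but somewhat tedious analysis of shortest paths in the wall, using the coordinates that~$W_0$ inherits from the $(2t\times t)$-grid and the fact that the perimeter of a wall has no long thin protrusions.
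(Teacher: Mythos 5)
Your proposal follows essentially the same route as the paper: extract a $t$-wall via the Grid Minor Theorem, give each subdivided perimeter edge a small total weight, each inner wall edge a weight larger by a bounded constant factor, and every non-wall edge a huge weight, then use that the perimeter is the $\mathbb{F}_2$-sum of the (subdivided) hexagonal faces, each of $\ell$-length at most~$1$ after choosing the constants. The ``combinatorial fact'' you leave unproven is precisely the distortion bound $d_C(u,v) \le 3\, d_W(u,v)$ for the wall's perimeter (combined with the concatenation argument you sketch), which the paper likewise asserts without proof, so your argument is correct and matches the paper's in both substance and level of detail.
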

	     	
	     	\begin{proof}
	     		Let $s := 3L$. By the Grid Minor Theorem, there exists an integer~$k$ so that every graph of tree-width at least~$k$ contains an $s$-wall. Suppose ~$G$ is a graph of tree-width at least~$k$. Let~$W$ be an elementary $s$-wall so that~$G$ contains some subdivision~$W'$ of~$W$, where $e \in E(W)$ has been replaced by some path $P^e \sub G$ of length~$m(e)$.

	     		The outer cycle~$C$ of~$W$ satisfies $d_C(u,v) \leq 3 d_W(u,v)$ for all $u,v \in V(C)$. Moreover, $C$ is the $\mathbb{F}_2$-sum of cycles of length at most~six.
	     		
	     		Define a length-function~$\ell$ on~$G$ as follows. Let $e \in E(G)$. If $e \in P^f$ for $f \in E(C)$, let $\ell(e) := 1/m(f)$. Then $\ell(P^f) = 1$ for every $f \in E(C)$. If $e \in P^f$ for $f \in E(W) \setminus E(C)$, let $\ell(e) := 3/m(f)$. Then $\ell(P^f) = 3$ for every $f \in E(W) \setminus E(C)$. If $e \notin E(W')$, let $\ell(e) := 10s^3$, so that $\ell(e) > \ell(W')$.
	     		
	     		It is easy to see that the subdivision $C' \sub G$ of~$C$ is $\ell$-geodesic in~$G$. It has length $\ell(C') = |C| \geq 6s$ and is the $\mathbb{F}_2$-sum of the subdivisions of 6-cycles of~$W$. Each of these satisfies $\ell(D) \leq 18$. Rescaling all lengths by a factor of~$1/18$ yields the desired result.
	     		
	     	\end{proof}

     \end{section}

\bibliographystyle{plain}
\bibliography{collective}

     \end{document}